\newtheorem{theorem}{Theorem}[section]
\newtheorem{lemma}[theorem]{Lemma}
\newtheorem{corollary}[theorem]{Corollary}
\newtheorem{proposition}[theorem]{Proposition}
\theoremstyle{definition}
\newtheorem{definition}[theorem]{Definition}
\newcommand{\supp}{\mbox{\rm supp}}
\newcommand{\Aut}{\mbox{\rm Aut}}
\newcommand{\Z}{\mathbb{Z}}
\newcommand{\GL}{{\rm GL}}
\newcommand{\Char}{{\rm char}}
\begin{document}
	
	\title[Locally finite skew group algebras]{On locally finite skew group algebras}
	\author[Bui Xuan Hai]{Bui Xuan Hai $^{1,2}$}
	
	\author[Cao Minh Nam]{Cao Minh Nam$^{1, 2}$}
			
	\author[Mai Hoang Bien]{Mai Hoang Bien$^{1, 2}$}
	\address[1]{Faculty of Mathematics and Computer Science, University of Science, Ho Chi Minh City, Vietnam;}
	\address[2]{Vietnam National University, Ho Chi Minh City, Vietnam.}
	
	\email{bxhai@hcmus.edu.vn; caominhnam.dhsp@gmail.com; mhbien@hcmus.edu.vn }
	
	\keywords{power central group identity; free subgroup; skew group algebra. \\
	\protect \indent 2020 {\it Mathematics Subject Classification.} 16K20, 16K40, 16R50.}
\maketitle
\begin{abstract}
In this note, we study the problem on the existence of non-cyclic free subgroups of the skew group algebra of a locally finite group over a field.  

\end{abstract}	
\section{Introduction and preliminaries} 
 Let $F$ be a field, $G$ a locally finite group, and $F^\sigma G$ the skew group algebra of $G$ over $F$ respect to a group homomorphism $\sigma: G\longrightarrow \Aut(F)$. The aim of this note is to study the existence of non-cyclic free subgroups in an almost subnormal subgroups of $F^\sigma G$. The starting point for the problem on the existence of free sub-objects in algebraic structures is the famous work of Tits \cite[theorems 1, 2]{tits}, where he proved  the existence of non-cyclic free subgroups in linear groups over fields. This result of Tits (which is now often referred to as Tits' Alternative) has been  inspiring various investigations of the existence of free sub-objects in different algebraic structures. In this section, we gather the necessary definitions and some preliminary results we need to establish the main theorem which will be given in the next last section.  

Let $X=\{x_1,\dots,x_m\}$ be a set of noncommuting indeterminates. Then, $X^{-1}$ is understood to be the set $\{x_1^{-1}, \dots, x_m^{-1}\}$ such that $x_i\mapsto x_i^{-1}$ defines a bijective function. For a field $F$, let  $F[X, X^{-1}]$ denote the group algebra over $F$ of the free group on $X$. 
Sometimes, one calls $F[X, X^{-1}]$ the {\it Laurent polynomial ring} on $X$ over $F$. Let $R$ be  a ring whose center contains $F$. Then, the free product $R*_FF[X, X^{-1}]$ of $R$ and $F[X, X^{-1}]$ over $F$ is called the \textit{generalized Laurent polynomial ring} of $R$ in $X$ over $F$ and is denoted by $R_F[X, X^{-1}]$. 
Let $f\in R_F[X, X^{-1}]$, that is, $f(x_1,\dots,x_m)=\sum\limits_{\lambda=1}^{\ell}f_\lambda$  with $$f_\lambda(x_1, \dots,x_m)=a_{\lambda, 1}x_{i_{\lambda,1}}^{n_{\lambda,1}}a_{\lambda, 2}\cdots a_{\lambda, t_\lambda}x_{i_{\lambda,t}}^{n_{\lambda,t}}a_{\lambda,t_\lambda+1},$$ where $a_{i,j}\in R, n_{i,j}\in \Z$. If all $n_{i,j}$ are non-negative, then $f$ is called a {\it generalized polynomial} of $R$ in $X$ over $F$.

An element in $R_F[X, X^{-1}]\backslash R$ of the  form $$w=w(x_1, \dots, x_m)=a_1x_{i_1}^{n_1}\cdots a_tx_{i_t}^{n_t}a_{t+1},$$ where $a_i\in R^*$, $n_i\in \Z$, is called \textit{a generalized group monomial} over $R^*$ (see \cite{Pa_To_85}). 
If all coefficients $a_i$ are $1$, then $w$ is a \textit{group monomial}. We say that $w$ is a \textit{strict generalized group monomial} if whenever $n_jn_{j+1}<0$ for $1\le j< t$, then $i_j\ne i_{j+1}$.


\begin{definition}Let $R_F[X, X^{-1}]$ be as above and $H$ a subgroup of $R^*$. Assume that $w=w(x_1, \dots, x_m)$ is a generalized group monomial over $R^*$.
	If for every $(c_1, \dots,c_m)\in H^m$, there is a positive integer $p$ depending on $(c_1, \dots,c_m)$ such that $[w(c_1, \dots,c_m)]^p\in Z(R)$,  then $w$ is called a \textit{generalized power central group identity} (briefly, GPCGI) of $H$ or $H$ \textit{satisfies the} GPCGI $w$ (see \cite{chiba_88}). Additionally, if $w^p\not\in Z(R)$ for every positive integer $p$, then we say that $w$ is a \textit{non-trivial} GPCGI of $H$.
\end{definition}

\begin{definition} Assume that $w=w(x_1, \dots,x_m)$ is a non-trivial GPCGI over $R^*$ of a subgroup $H\subseteq R^*$. 
	\begin{enumerate}
		\item If $w$ is a group monomial, then we say that $w$ is a \textit{power central group identity} (shortly, PCGI) of $H$.
		\item If $w$ is a strict generalized group monomial, then we say that $w$ is a \textit{strict generalized power central group identity} (shortly, strict GPCGI) of $H$.		
		\item If for every $(c_1, \dots,c_m)\in H^m$, there is a positive integer $p$ depending on $(c_1, \dots,c_m)$ such that $[w(c_1,\dots,c_m)]^p=1$,  then $w$ is called a \textit{generalized torsion group identity} (briefly, GTGI) of $H$. Additionally, if $w$ is a group monomial, then we call $w$ \textit{torsion group identity} (briefly, TGI) of $H$.
		
	\end{enumerate}
\end{definition}

\begin{definition} Let $R_F[X, X^{-1}]$ be as above and $H$ a subgroup of $R^*$. Assume that $w=w(x_1,\dots,x_m)$ is a generalized group monomial over $R^*$.
	We say that $w=1$ is a \textit{generalized group identity} (briefly, GGI) of $H$ if $w(c_1,\dots,c_m)=1$ for any $(c_1, \dots,c_m)\in H^m$. Additionally,
	\begin{enumerate}
		\item If $w$ is a group monomial, then $w=1$ is called a \textit{group identity} (briefly, GI) of $H$.
		\item If $w$ is a strict generalized group monomial, then $w=1$ is called a \textit{strict generalized group identity} (briefly, strict GGI) of $H$.
	\end{enumerate}
\end{definition}

\begin{lemma}\label{2.4}	Let $R$ and $S$ be two $F$-algebras. If $w=w(x_1,\dots,x_m)$ is a strict generalized group monomial (resp., group monomial) over $R$, then so is $w_f$ over $S^*$ for every $F$-algebra homomorphism $f : R\to S$.
\end{lemma}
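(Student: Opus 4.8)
The plan is to realize $w_f$ explicitly as the image of $w$ under the canonical extension of $f$, and then to observe that the two features defining a (strict) generalized group monomial — invertibility of the coefficients and the combinatorial strictness constraint — are each preserved for separate and entirely elementary reasons.

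First I would pin down the map. An $F$-algebra homomorphism $f\colon R\to S$ extends uniquely to an $F$-algebra homomorphism $R_F[X,X^{-1}]\to S_F[X,X^{-1}]$ that fixes each $x_i$ and restricts to $f$ on $R$; this is exactly the universal property of the free product $R*_F F[X,X^{-1}]$. Writing $w=a_1x_{i_1}^{n_1}\cdots a_tx_{i_t}^{n_t}a_{t+1}$ with $a_i\in R^*$ and $n_i\in\Z$, its image is $w_f=f(a_1)x_{i_1}^{n_1}\cdots f(a_t)x_{i_t}^{n_t}f(a_{t+1})$. The key point to record here is that $w_f$ carries exactly the same sequence of indices $i_j$ and exponents $n_j$ as $w$, with each coefficient $a_j$ merely replaced by $f(a_j)$.

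Next I would verify that $w_f$ is again a generalized group monomial over $S^*$. Since $f$ is unital, for each $j$ we have $f(a_j)f(a_j^{-1})=f(a_ja_j^{-1})=f(1)=1$, and similarly on the other side, so $f(a_j)\in S^*$; in particular each $f(a_j)$ is nonzero. Because $w\notin R$, at least one exponent $n_j$ is nonzero, and this same factor survives in $w_f$ flanked by unit coefficients, whence $w_f\notin S$. Thus $w_f$ has precisely the required form of a generalized group monomial over $S^*$.

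Finally, the two refinements follow at once. The strictness condition — that $n_jn_{j+1}<0$ forces $i_j\ne i_{j+1}$ — involves only the exponents and indices, which are identical for $w$ and $w_f$; hence $w$ strict implies $w_f$ strict. And if $w$ is a group monomial, i.e.\ every $a_j=1$, then every coefficient of $w_f$ equals $f(1)=1$, so $w_f$ is a group monomial as well. I do not anticipate any genuine obstacle: the only step that truly uses a hypothesis is the unitality of $f$ (ensuring units map to units and that $w_f\notin S$), after which both assertions are purely formal, since $f$ alters the coefficients but touches neither the index pattern nor the exponent pattern on which strictness and the group-monomial property depend.
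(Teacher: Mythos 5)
Your proposal is correct and follows essentially the same route as the paper's proof: replace each coefficient $a_j$ by $f(a_j)$, note that unitality of $f$ makes each $f(a_j)$ a unit of $S$, and observe that strictness (and the group-monomial property) depends only on the index and exponent pattern, which is unchanged. Your write-up merely makes explicit two points the paper treats as obvious — the universal-property description of the induced map and the computation $f(a_j)f(a_j^{-1})=1$ — so it is the same argument in fuller detail.
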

\begin{proof} We assume that $w=w(x_1,\dots,x_m)=a_1x_{i_1}^{n_1}\cdots a_tx_{i_t}^{n_t}a_{t+1}$ is a strict generalized group monomial over $R^*$, that is, $t\ge 1$ and one has $i_j\ne i_{j+1}$ whenever  $n_jn_{j+1}<0$ for $1\le j<t$. Then, $w$ induces a monomial $$w_f=w_f (x_1,\dots,x_m)=f(a_1) x_{i_1}^{n_1}  \cdots f(a_t)x_{i_t}^{n_t} f(a_{t+1})$$ over $S^*$. It is obvious that $f(a_i)$ is invertible in $S$ for every $1\le i\le t+1$. So,  trivially, $w_f$ is a strict generalized group monomial over $S^*$.
\end{proof}
The following corollary is immediate.
\begin{corollary}\label{2.5} Let $f : R\to S$ be an $F$-algebra homomorphism. If $w$ is a strict GPCGI (resp., strict GGI and GI) of $H$, then  $w_f$ is a strict GPCGI (resp., strict GGI and GI) of $f(H)$ for every subgroup  $H$ of $R^*$.
\end{corollary}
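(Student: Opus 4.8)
The plan is to reduce everything to Lemma~\ref{2.4} together with the single fact that an $F$-algebra homomorphism commutes with monomial evaluation, separating the \emph{shape} of $w_f$ (handled by the lemma) from the \emph{identity} it satisfies (handled by a direct substitution). Write $w=a_1x_{i_1}^{n_1}\cdots a_tx_{i_t}^{n_t}a_{t+1}$ with $a_i\in R^*$. By Lemma~\ref{2.4}, $w_f=f(a_1)x_{i_1}^{n_1}\cdots f(a_t)x_{i_t}^{n_t}f(a_{t+1})$ is again a strict generalized group monomial (resp.\ group monomial) over $S^*$, which already supplies the structural part of each of the three conclusions. The one computation I would record explicitly is that, for every $(c_1,\dots,c_m)\in H^m$, writing $d_j=f(c_j)$,
\[
w_f(d_1,\dots,d_m)=f\bigl(w(c_1,\dots,c_m)\bigr),
\]
which holds because $f$ is multiplicative, $F$-linear and unital; moreover every element of $f(H)^m$ is of the form $(f(c_1),\dots,f(c_m))$ with $(c_1,\dots,c_m)\in H^m$.

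Granting this identity, the two ``group identity'' cases are purely formal. If $w=1$ is a strict GGI (resp.\ a GI) of $H$, then $w(c_1,\dots,c_m)=1$ for all $(c_1,\dots,c_m)\in H^m$, whence $w_f(d_1,\dots,d_m)=f(1)=1$ for all $(d_1,\dots,d_m)\in f(H)^m$; together with the shape given by Lemma~\ref{2.4} this is exactly the assertion that $w_f=1$ is a strict GGI (resp.\ GI) of $f(H)$.

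For the strict GPCGI case I would split the claim into its two ingredients. The power-central part transfers directly: if $[w(c_1,\dots,c_m)]^p\in Z(R)$, then raising the displayed identity to the $p$-th power gives $[w_f(d_1,\dots,d_m)]^p=f\bigl([w(c_1,\dots,c_m)]^p\bigr)$, so it suffices to know that the image of this central element again lies in $Z(S)$. Non-triviality, i.e.\ $w_f^p\notin Z(S)$ for all $p$, I would deduce from the strictness: Lemma~\ref{2.4} keeps $w_f$ in $S_F[X,X^{-1}]\setminus S$, and the strict condition prevents the syllables $x_{i_j}^{n_j}$ from cancelling across the junctions of $w_f^p$, so every power remains a nonconstant word in the free product $S*_FF[X,X^{-1}]$ and hence cannot lie in $Z(S)\subseteq S$.

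The only step that is not mechanical, and which I expect to be the crux, is the centrality claim $f(z)\in Z(S)$ for $z\in Z(R)$. Such an $f(z)$ certainly commutes with the whole image $f(R)$, so the inclusion $f\bigl(Z(R)\bigr)\subseteq Z(S)$ is clear when $f$ is surjective (for instance a quotient map) but is not automatic for a general $F$-algebra homomorphism; I would therefore either use the surjectivity available in the situations where the corollary is applied, or phrase the power-central condition relative to the centralizer of $f(R)$ in $S$. Once this is settled, all three statements follow at once from Lemma~\ref{2.4} and the evaluation identity.
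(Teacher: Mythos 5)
Your argument has the same skeleton the paper intends: the paper gives no proof at all (it declares the corollary ``immediate'' after Lemma~\ref{2.4}), and the two ingredients it is implicitly relying on are exactly yours --- Lemma~\ref{2.4} for the shape of $w_f$, and the evaluation identity $w_f(f(c_1),\dots,f(c_m))=f(w(c_1,\dots,c_m))$ for transporting the identity from $H$ to $f(H)$. Your treatment of the GGI and GI cases is complete and is precisely the intended argument.

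The centrality issue you flag is not a weakness of your write-up but a genuine defect of the statement: for non-surjective $f$ the GPCGI half of the corollary is false as written. Concretely, take $F=\Q$, $R=\Q\times\Q$, $S=M_2(\Q)$, $f(a,b)=\diag(a,b)$, $H=R^*$ and $w=x_1$. Then $w$ is a strict GPCGI of $H$ (every value is central because $R$ is commutative, and $x_1^p\notin Z(R)$ formally for all $p$), while $w_f=x_1$ is not even a GPCGI of $f(H)$: no power of $\diag(1,2)$ is a scalar matrix, i.e.\ lies in $Z(M_2(\Q))$. So the corollary must be read with $f$ surjective (which forces $f(Z(R))\subseteq Z(S)$), or with the conclusion phrased relative to the centralizer of $f(R)$ in $S$ --- exactly the two repairs you propose --- and surjectivity does hold at the unique point where the corollary is invoked: in the proof of Theorem~\ref{main}, $\psi_\ell=\phi_\ell\phi$ is a quotient map followed by a projection. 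One smaller correction: your justification of non-triviality (``strictness prevents the syllables from cancelling across the junctions of $w_f^p$'') overstates what strictness gives. Cancellation at a junction is possible: if $t\ge 3$, $f(a_{t+1}a_1)\in F$, $i_t=i_1$ and $n_t=-n_1$, the word $w_f^2$ does partially collapse, and nothing in the strictness condition forbids this, since those two syllables are not consecutive in $w$. What strictness does forbid is the \emph{total} collapse of $w_f^p$ into $S$: any complete telescoping must, at its innermost stage, either cancel two syllables that are consecutive in $w$ (contradicting strictness) or force $z^2=1$ for a nontrivial element $z$ of a free group, which is impossible. With that repair, and with surjectivity added to the hypotheses, your proof is complete --- and considerably more careful than the paper, which passes over both points in silence.
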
	

\section{Free subgroups in skew group algebras}

In this section, firstly, we consider the influence of power central group identities to the structure of  finite dimensional algebras over fields in order to study the subgroup structure of skew group algebras of locally finite groups over fields. As a main result, we prove the theorem (see Theorem \ref{main_new}) on the existence of non-cyclic free subgroups in an almost subnormal subgroup of the skew group algebra of a locally finite group over a field. 

Let $H$ be a group and $N$ a subgroup of $H$. In accordance with Hartley \cite{Pa_Ha_89}, we say that $N$ is \textit{almost subnormal} in $H$  if there is a series of subgroups 
$$N=N_r< N_{r-1}< \dots < N_1=H$$
such that for each $1<i\le r$, either $N_{i}$ is normal in $N_{i-1}$ or the index $[N_{i-1}:N_i]$ is finite. We call such a series of subgroups an \textit{almost normal series} in $H$. Observe that for any group homomorphism $f : H\to K$ and  two subgroups $N_1\le N_2$ of $H$, if $N_1$ is normal in $N_2$ or $[N_2:N_1]<\infty$, then $f(N_1)$ is normal in $f(N_2)$ or $[f(N_2):f(N_1)]<\infty$ respectively. Hence, the following lemma is obvious.

\begin{lemma} \label{3.1} If $N$ is an almost subnormal subgroup in $H$, then $f(H)$ is almost subnormal in $f(H)$ for every group homomorphism $f : H\to K.$
\end{lemma}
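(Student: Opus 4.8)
The plan is to push the almost normal series for $N$ through $f$ and check, term by term, that the resulting chain is again an almost normal series; the conclusion I am aiming for is that $f(N)$ is almost subnormal in $f(H)$. First I would fix an almost normal series
$$N=N_r< N_{r-1}< \dots < N_1=H,$$
so that for each $1<i\le r$ either $N_i\trianglelefteq N_{i-1}$ or $[N_{i-1}:N_i]<\infty$. Applying $f$ termwise produces the chain
$$f(N)=f(N_r)\le f(N_{r-1})\le \dots \le f(N_1)=f(H)$$
of subgroups of $K$, which starts at $f(N)$ and ends at $f(H)$, exactly as demanded by the definition.

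The key point is that each consecutive pair inherits the relevant property, which is precisely the observation recorded in the paragraph preceding the lemma. If $N_i\trianglelefteq N_{i-1}$, then for any $f(n)\in f(N_i)$ and $f(g)\in f(N_{i-1})$ one has $f(g)f(n)f(g)^{-1}=f(gng^{-1})\in f(N_i)$, so $f(N_i)\trianglelefteq f(N_{i-1})$. If instead $[N_{i-1}:N_i]<\infty$, writing $N_{i-1}$ as a finite union of left cosets $g_jN_i$ and applying $f$ exhibits $f(N_{i-1})$ as the union of the cosets $f(g_j)f(N_i)$, whence $[f(N_{i-1}):f(N_i)]\le[N_{i-1}:N_i]<\infty$.

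The only remaining wrinkle is that the definition of an almost normal series requires \emph{strict} inclusions, whereas $f$ may collapse consecutive terms, i.e. $f(N_i)=f(N_{i-1})$ for some $i$. To repair this I would simply delete repeated terms: when $f(N_i)=f(N_{i-1})$, removing one of the two equal terms leaves a consecutive pair that coincides, as a pair of subgroups, with a pair already present in the image chain, so it still satisfies the normal-or-finite-index condition. After this cosmetic cleanup the image chain is a genuine almost normal series from $f(N)$ up to $f(H)$, proving the claim. I do not expect any real obstacle here; the entire content sits in the two elementary preservation facts above, which is exactly why the lemma can be asserted to be obvious.
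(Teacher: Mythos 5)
Your proof is correct and follows exactly the route the paper takes: the paper's ``proof'' is precisely the observation (stated in the paragraph before the lemma) that a homomorphism preserves both normality and finiteness of index for each consecutive pair, after which it declares the lemma obvious. Your write-up simply fills in those two preservation facts and, as a bonus, handles the strictness-of-inclusions wrinkle (collapsing terms) that the paper silently glosses over.
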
 

Recall that a field $F$ is {\it absolute} (or \textit{locally finite}) if its prime subfield $P$ is  finite and $F$ is algebraic over $P$. The class of non-absolute fields includes fields of characteristic $0$ and uncountable fields \cite[Corollary B-2.41]{rotman}. Assume that $R$ is an $F$-algebra. If $F$ is absolute, then there is no much information about the structure of $R$. For instance, if $F$ is absolute and $R$ is a finite dimensional $F$-algebra, then every element of $R$ is algebraic over a finite field, which implies that $R^*$ is a torsion group and, hence, $R^*$ always satisfies the power central group identity $x$. Thus, in the following theorem, $F$ is assumed to be non-absolute. 

\begin{theorem}\label{main}
	Let $R$ be a finite dimensional algebra over a non-absolute field $F$ and assume that  $w=w(x_1,\dots,x_m)$ is a strict generalized group monomial over $R^*$. If $N$ is an almost subnormal subgroup of $R^*$ satisfying a strict {\rm GPCGI} $w$, then the derived group $N'=[N,N]$ is contained in $1+J(R)$. Additionally, if $\Char(F)=p>0$, then $N$ satisfies the group identity $(xyx^{-1}y^{-1})^{p^s}=1$ for some positive integer $s$.
\end{theorem}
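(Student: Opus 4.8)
The plan is to exploit the classical structure theory of finite dimensional algebras. I would first reduce to the semisimple case by passing to $R/J(R)$, where $J(R)$ denotes the Jacobson radical. By Corollary \ref{2.5} applied to the canonical projection $\pi : R \to R/J(R)$, the image $\pi(N)$ is again almost subnormal (by Lemma \ref{3.1}) and satisfies the strict GPCGI $w_\pi$ over $(R/J(R))^*$. Since $R/J(R)$ is a finite dimensional semisimple $F$-algebra, the Wedderburn--Artin theorem decomposes it as a finite product of matrix algebras $\Mat_{n_k}(D_k)$ over finite dimensional division algebras $D_k$ over $F$. The goal then becomes to show $\pi(N)' = 1$, i.e. that $\pi(N)$ is abelian, which is equivalent to $N' \subseteq 1 + J(R)$ since $\ker\pi = J(R)$ gives $1 + J(R) = \pi^{-1}(1)$ on the level of the unit group.

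Next I would analyze each Wedderburn component separately, projecting $\pi(N)$ to its image in each factor $\Mat_{n_k}(D_k)^*$. The heart of the argument is to show that a strict GPCGI forces each such projection to be central. Here the key tool is the theory connecting power central group identities to the existence of free subgroups: an almost subnormal subgroup of the multiplicative group of a matrix algebra over a division ring, when the base field is non-absolute, is either central (abelian in the relevant sense) or else contains a non-cyclic free subgroup, and a free subgroup cannot satisfy a non-trivial GPCGI. I would invoke results of the Gon\c{c}alves--Chiba--Hartley type on almost subnormal subgroups and generalized group identities: specifically, if such a subgroup satisfies a strict GPCGI, it cannot contain a free subgroup, and the dichotomy then forces it to be central in each component. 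Combining over all components yields that $\pi(N)$ is central in $R/J(R)$, hence $\pi(N)' = 1$ and $N' \subseteq 1 + J(R)$.

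For the positive characteristic addendum, I would use that $1 + J(R)$ is a finite $p$-group when $\Char(F) = p > 0$ and $R$ is finite dimensional, because $J(R)$ is a nilpotent ideal (say $J(R)^s = 0$) and in characteristic $p$ the group $1 + J(R)$ has exponent dividing $p^s$: for $u \in J(R)$ one computes $(1+u)^{p^s} = 1 + u^{p^s} = 1$ using the Frobenius-type expansion where all intermediate binomial coefficients vanish modulo $p$ once the exponent is a sufficiently high power of $p$ and $u$ is nilpotent. Since the derived group $N'$ lands in $1 + J(R)$ by the first part, every commutator $xyx^{-1}y^{-1}$ with $x, y \in N$ satisfies $(xyx^{-1}y^{-1})^{p^s} = 1$, which is exactly the asserted group identity.

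The main obstacle I anticipate is the semisimple step, namely rigorously establishing that a strict GPCGI rules out free subgroups and thereby forces centrality in each matrix component. This requires the delicate free-subgroup machinery for almost subnormal subgroups over non-absolute fields, and care must be taken that the strictness hypothesis on the generalized group monomial is genuinely used: it is strictness (via Lemma \ref{2.4}, which guarantees $w_\pi$ remains a strict generalized group monomial) that prevents degenerate cancellations and lets the identity descend to the homomorphic images while remaining non-trivial enough to obstruct freeness. Verifying that the GPCGI does not become trivial after projection, and handling the interplay between the central-in-each-component conclusions and the global structure, is where the substantive work lies.
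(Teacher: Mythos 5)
Your proposal follows essentially the same route as the paper's proof: reduce modulo $J(R)$ (units lift modulo the radical, so the image of $N$ is an almost subnormal subgroup of $(R/J(R))^*$ satisfying the induced strict GPCGI, by Lemma~\ref{3.1} and Corollary~\ref{2.5}), decompose $R/J(R)$ by Wedderburn--Artin, force each component image to be central using the free-subgroup/identity machinery for almost subnormal subgroups of $\GL_n(D)$ (the paper cites \cite[Theorem 2.5]{HBK17} for precisely this step), conclude $N'\subseteq 1+J(R)$, and obtain the characteristic-$p$ identity from nilpotency of $J(R)$ together with the Frobenius computation $(c-1)^{p^s}=c^{p^s}-1$.

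There is, however, one genuine gap. The component-wise result you invoke holds ``when the base field is non-absolute,'' but the relevant base field for the $\ell$-th Wedderburn component is the center $F_\ell=Z(D_\ell)$ of that component, not $F$ itself, and you never verify that these centers (equivalently, the $D_\ell$) are non-absolute. This is not automatic from the hypotheses as you have stated them, and the paper spends roughly a third of its proof on exactly this point: it shows no $D_\ell$ can be an absolute field, arguing that otherwise $\GL_{n_\ell}(D_\ell)$ would be locally finite, whence the image of a non-torsion scalar $\alpha\in F^*$ (which exists because $F$ is non-absolute) would have a torsion coordinate, making $1-\alpha^t$ a non-invertible element of the field $F$ for some $t\ge 1$, a contradiction. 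The gap is easy to fill once noticed---for instance, $F$ embeds as a central subfield of each simple component (a unital ring homomorphism from a field is injective, and a surjection carries center into center), so each $F_\ell$ contains a copy of $F$ and inherits non-absoluteness---but some such verification must appear, since it is the hypothesis on which your whole ``central or free'' dichotomy rests. Two minor points: your claim that $1+J(R)$ is a \emph{finite} $p$-group is false in general (it only has exponent dividing $p^s$, which is all your computation actually uses); and the assertion that a free group cannot satisfy a non-trivial GPCGI genuinely requires strictness, as you suspect: for non-central $a$, the non-strict monomial $axa^{-1}x^{-1}$ is a non-trivial GGI of any free subgroup of the centralizer of $a$.
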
 
\begin{proof} Let $$w(x_1,\dots,x_m)=a_1x_{i_1}^{n_1}\dots a_tx_{i_t}^{n_t}a_{t+1}$$ be a strict GPCGI of $N$.
	Since $R$ is finite dimensional over $F$, by  Wedderburn-Artin's theorem, $$R/J(R)\cong M_{n_1}(D_1)\times\dots\times M_{n_k}(D_k),$$ where $D_i$ is a finite-dimensional division ring over its center $F_i=Z(D_i)$. We claim that all $D_i$ are non-absolute fields. Indeed, without loss of generality, assume that $D_1$ is an absolute field. Then, it is easy to see that $\GL_{n_1}(D_1)$ is a locally finite group and $\Char(D_1)=p>0$. The natural ring homomorphism
	$$\phi: R\longrightarrow R/J(R)$$
	induces the group homomorphism which is also denoted by $\phi$.
	$$\phi: R^*\longrightarrow \GL_{n_1}(D_1)\times\dots\times \GL_{n_k}(D_k).$$
	For each $\alpha\in F^*$, let  $$a=\phi(\alpha)=(a_1,\dots,a_k)\in \GL_{n_1}(D_1)\times\dots\times \GL_{n_k}(D_k).$$ 
	Since $F$ is non-absolute, there exists $\alpha\in F^*$ which is not algebraic over the prime subfield $P\cong \Z_p$ of $F$. Observe that $a_1$  has a finite order in  $\GL_{n_1}(D_1)$, so $a^t=(a_1^t,\dots,a_k^t)=(1,\dots,a_k^t)$ for some positive integer $t$. Hence, $1-a^t=(0,\dots,a_k^t)$ is non-invertible in $R/J(R)$ which implies that $1-\alpha^t$ is non-invertible in $R$. Consequently, $1-\alpha^t$ is non-invertible in $F$, so $\alpha^t=1$, a contradiction. Thus, the claim is proved. 
	
	For each $1\le \ell\le k$, denote by $\phi_\ell \colon \GL_{n_1}(D_1)\times \dots\times \GL_{n_k}(D_k) \to \GL_{n_\ell}(D_\ell)$ the $\ell$-th projection, that is, $\phi_\ell(a_1,\dots,a_k)=a_\ell$ for every $(a_1,\dots,a_k)\in \GL_{n_1}(D_1)\times \dots\times \GL_{n_k}(D_k)$. Put $\psi_\ell=\phi_\ell\phi$ and $M_\ell=\psi(N)$. Then, by Lemma~\ref{3.1}, $M_\ell$ is almost subnormal in $\psi_\ell (R) =\GL_{n_\ell}(D_\ell)$. By Corollary~\ref{2.5},  $$w_{\psi_\ell}(x_1,\dots,x_m)=\psi_\ell (a_{1})x_{i_1}^{\epsilon_1}\psi_\ell(a_{ 2})\dots \psi_\ell(a_{t})x_{i_t}^{\epsilon_t}\psi_\ell(a_{t+1})$$ is a GPCGI of $M_\ell$. In view of \cite[Theorem 2.5]{HBK17}, we have $M_\ell\subseteq F_\ell$. 
	Observe that $$\phi(a)=(\phi_1(a),\dots,\phi_k(a))\in M_1\times \dots\times M_k$$ for every $a\in N,$ so $\phi(N)$ is abelian. On the other hand,  $R^*/(1+J(R))\cong (R/J(R))^*$, hence $\phi(N)=N(1+J(R))/(1+J(R))$. Therefore, $$[N(1+J(R)),N(1+J(R))]\subseteq 1+J(R).$$
	In particular, $N'=[N,N]\subseteq 1+J(R)$, so the first part of the proof is completed. 
	
	Now, assume additionally that $\Char(F)=p>0$. Since $R$ is artinian,  $J(R)$ is nilpotent. Let $s$ be a positive integer such that $J(R)^s=0$. Then, $J(R)^{p^s}=0$ because $p^s>s$. Therefore, for every $a,b\in N$, $$(aba^{-1}b^{-1})^{p^s}-1=(aba^{-1}b^{-1}-1)^{p^s}\in (J(R))^{p^s}=\{0\}.$$
	Thus, $N$ satisfies a group identity $(xyx^{-1}y^{-1})^{p^s}=1$.
\end{proof}

The following result shows the relation between the existence of non-cyclic free subgroups and strict GPCGI's in finite-dimensional $F$-algebras. Note that if $R$ is a finite dimensional algebra over $F$, then $R$ may be considered as an $F$-subalgebra of $M_n(F)$ with $n=\dim_FR$ and the unit group $R^*$ of $R$ as a linear group over $F$.

\begin{lemma}\label{4.1}
	Let $F$ be a field, $R$ a finite dimensional  $F$-algebra, and  $N$ a finitely generated subgroup of the unit group $R^*$ of $R$. Then, the  following assertions are equivalent:
	\begin{enumerate}
		\item $N$ contains no non-cyclic free subgroups.
		\item $N$ satisfies a GI.
		\item $N$ satisfies a strict GGI.
		\item $N$ satisfies a PCGI.
		\item $N$ satisfies a strict GPCGI.
	\end{enumerate}
\end{lemma}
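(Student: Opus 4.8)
The plan is to prove the four implications $(2)\Rightarrow(3)$, $(2)\Rightarrow(4)$, $(3)\Rightarrow(5)$ and $(4)\Rightarrow(5)$ straight from the definitions, and then to close the circle with the two substantial steps $(1)\Rightarrow(2)$ and $(5)\Rightarrow(1)$; together these yield $(1)\Rightarrow(2)\Rightarrow(4)\Rightarrow(5)\Rightarrow(1)$ and $(2)\Rightarrow(3)\Rightarrow(5)$, so all five statements are equivalent. Throughout I regard $R$ as an $F$-subalgebra of $M_n(F)$, $n=\dim_FR$, so that $N$ is a finitely generated linear group. For the easy implications: a reduced group monomial automatically satisfies the strictness condition (consecutive factors have distinct indices), so a GI $w=1$ is at once a strict GGI, giving $(2)\Rightarrow(3)$; reading $w=1$ as an identity whose value $1$ is central makes $w$ a GPCGI, and since the formal word $w^p$ lies in $R_F[X,X^{-1}]\setminus R$ for every $p$ it is non-trivial, so $w$ is a PCGI, giving $(2)\Rightarrow(4)$. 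The same two observations give $(3)\Rightarrow(5)$ (a strict GGI is a non-trivial GPCGI carried by a strict generalized monomial) and $(4)\Rightarrow(5)$ (a group monomial is a strict generalized monomial).

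For $(1)\Rightarrow(2)$ I would appeal to Tits' Alternative \cite{tits}: a finitely generated linear group with no non-cyclic free subgroup is virtually solvable. Replacing the finite-index solvable subgroup by its normal core, fix $S\trianglelefteq N$ with $[N:S]=e$ and derived length $d$. Then $x^{e}\in S$ for all $x\in N$, and the iterated commutator word $\delta_d$ in $2^d$ variables is identically $1$ on $S$; hence $\delta_d(x_1^{e},\dots,x_{2^d}^{e})=1$ is a group identity of $N$.

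The heart of the matter is $(5)\Rightarrow(1)$, which I would split according to the field. If $F$ is absolute, every element of $R$ is algebraic over a finite field, so $R^*$ is a torsion group; a finitely generated torsion linear group is finite, and a finite group has no non-cyclic free subgroup, so $(1)$ holds. If $F$ is non-absolute, I would invoke Theorem \ref{main} to get $N'=[N,N]\subseteq 1+J(R)$. As $R$ is artinian, $J(R)^s=0$ for some $s$; from $[1+J(R)^i,\,1+J(R)^j]\subseteq 1+J(R)^{i+j}$ and the chain $1+J(R)\supseteq 1+J(R)^2\supseteq\dots\supseteq 1+J(R)^s=\{1\}$ one sees that $1+J(R)$ is nilpotent. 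Hence $N'$ is nilpotent, $N$ is solvable, and a solvable group contains no non-cyclic free subgroup. The main obstacle lies exactly here: Theorem \ref{main} is stated for an almost subnormal subgroup of $R^*$, whereas in the lemma $N$ is merely finitely generated, and the coefficients of $w$ live in $R$, so one cannot simply pass to the subalgebra generated by a free subgroup. Reconciling this — either by arranging that the relevant subgroup may be taken almost subnormal in $R^*$, or by establishing directly that a non-cyclic free group admits no strict GPCGI (so that a free subgroup of $N$ would already contradict $(5)$) — is the delicate point; the remaining ingredients (Tits' Alternative, nilpotence of $1+J(R)$, and finiteness of finitely generated torsion linear groups) are routine.
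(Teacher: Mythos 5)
Your easy implications and $(1)\Rightarrow(2)$ are fine and match the paper (Tits' Alternative plus a power-substituted solvability identity; your passage to the normal core is unnecessary but harmless), and your disposal of the absolute case of $(5)\Rightarrow(1)$ via Schur's theorem on torsion linear groups is correct. But the heart of the lemma, $(5)\Rightarrow(1)$ over a non-absolute field, is genuinely missing from your proposal, as you yourself concede: Theorem \ref{main} simply does not apply, because its hypothesis is that the subgroup satisfying the strict GPCGI is \emph{almost subnormal} in $R^*$, while here $N$ is only finitely generated (indeed, the whole point is to handle subgroups like a candidate free subgroup, which is as far from subnormal as possible). Neither of your suggested repairs is carried out, and the second one --- showing directly that a non-cyclic free linear group admits no strict GPCGI --- is not a side remark but essentially the full content of the implication; flagging it as ``the delicate point'' does not discharge it.

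The paper closes this gap by a completely different mechanism, which makes no use of Theorem \ref{main} at all (in the paper the logical dependence runs the other way: Theorem \ref{main_new} uses both Theorem \ref{main} and this lemma). Since $N=\langle B_1,\dots,B_k\rangle$ is finitely generated and the strict GPCGI $w$ has finitely many coefficients $A_i$, all entries involved lie in a finitely generated subfield $L=P(S)$ of $F$, so $N\le \GL_n(L)$ and $w$ is a strict GPCGI over $\GL_n(L)$. The key step is a \emph{uniformization}: by \cite[Lemma 3.1]{her2}, over the finitely generated field $L$ there is a single exponent $\alpha$ such that $[w(C_1,\dots,C_m)]^\alpha$ is central for \emph{all} tuples from $N$, not merely an exponent depending on the tuple. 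Since central elements commute, $N$ then satisfies the honest strict GGI
$$[w(x_1,\dots,x_m)]^\alpha [w(y_1,\dots,y_m)]^\alpha [w(x_1,\dots,x_m)]^{-\alpha}[w(y_1,\dots,y_m)]^{-\alpha}=1,$$
and Tomanov's theorem \cite[Theorem 1]{Pa_To_85} on generalized group identities in linear groups yields that $N$ is solvable-by-finite; Tits' results \cite{tits} then exclude non-cyclic free subgroups. So the two ingredients your argument lacks are precisely the Herstein--Procesi--Schacher uniform-exponent lemma (which is where finite generation of $N$ is really used) and Tomanov's theorem; without them, or some substitute, the implication $(5)\Rightarrow(1)$ remains unproved in your write-up.
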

\begin{proof} The implications $(2)\Rightarrow (3)\Rightarrow (5)$ and $(2)\Rightarrow (4)\Rightarrow (5)$ are trivial. It suffices to show $(1)\Rightarrow (2)$ and $(5)\Rightarrow (1)$. Let $n=\dim_FR=n$. Viewing $R^*$ as a subgroup of  $\GL_n(F)$, we conclude that $N$ is a finitely generated linear group over $F$. Suppose that $N=\langle B_1,\dots,B_k\rangle$.
	
	$(1)\Rightarrow (2)$. By Tits' results (\cite[theorems 1, 2]{tits}), $N$ is solvable-by-finite, that is, there exists a  solvable normal subgroup $A$ of $N$ of finite index $m$. Clearly, every solvable group satisfies some group identity, say $w(x_1,\dots,x_t)=1$. Hence,  $N$ satisfies the group identity $w(x_1^m,\dots,x_t^m)=1$.
	
	(5) $\Rightarrow$ (1). Assume that $N$ satisfies a strict GPCGI $$w=w(x_1,\dots,x_m)=A_1x_{i_1}^{n_1}\dots A_tx_{i_t}^{n_t}A_{t+1},$$ where $x_{i_j}\in \{x_1,\dots,x_m\}$,  $n_j\in\Z$, and $A_i\in R^*\leq \GL_n(F)$. Let $P$ be the prime subfield of $F$, $S$ be the subset of $F$ consisting of all entries of all $A_i$ and $B_j$, and let $L=P(S)$ be the subfield of $F$ generated by $S$. It is clear that $N$ is contained in  $\GL_n(L)$ and $N$ satisfies the strict GPCGI $w$ over  $\GL_n(L)$. For every $C_1,\dots,C_m\in N$, there exists a positive integer $\ell=\ell_{(C_1,\dots,C_m)}$ such that $[w(C_1,\dots,C_m)]^\ell\in L$. Hence, according to \cite[Lemma 3.1]{her2}, there exists a fixed positive integer $\alpha$ such that $[w(C_1,\dots,C_m)]^\alpha\in L$ for every $C_1,\dots,C_m\in N$. Hence, $N$ satisfies a strict GGI
	$$v(x_1,\dots,x_m, y_1,\dots,y_m)=$$
	$$[w(x_1,\dots,x_m)]^\alpha [w(y_1,\dots ,y_m)]^\alpha[w(x_1,\dots,x_m)]^{-\alpha} [w(y_1,\dots,y_m)]^{-\alpha}.$$
	In view of  \cite[Theorem 1]{Pa_To_85}, $N$ is solvable-by-finite. Hence, by Tits' results \cite[theorems 1, 2]{tits}, $N$ contains no non-cyclic free subgroups.
\end{proof}

\begin{proposition}\label{prop:4.3} Let $R$ be a ring of characteristic $p> 0$ with the unit group $R^*$. Then, the following assertions hold.
	
	(i) If $a\in R^*$ is an element of infinite order, then $1-a$ is not nilpotent.
	
	(ii) If the Jacobson radical $J(R)$ of $R$ is a nil ideal, then $R^*$ contains a non-cyclic free subgroup if and only if so does the group $(R/J(R))^*$. 
\end{proposition}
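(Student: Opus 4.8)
For part (i), the plan is to exploit the Frobenius endomorphism. Suppose toward a contradiction that $a\in R^*$ has infinite order but $1-a$ is nilpotent. Since $1$ and $a$ commute and $\Char(R)=p>0$, I would invoke the identity $(1-a)^{p^k}=1-a^{p^k}$, valid for every $k$ because $(x+y)^{p^k}=x^{p^k}+y^{p^k}$ for commuting $x,y$ in characteristic $p$ (note $(-1)^{p^k}=-1$ in odd characteristic, while $-1=1$ when $p=2$, so the sign works out in both cases). Choosing $k$ large enough that $(1-a)^{p^k}=0$ then forces $a^{p^k}=1$, so $a$ has finite order, a contradiction.

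For part (ii), I would first record the group-theoretic skeleton. The ring surjection $R\to R/J(R)$ induces a group homomorphism $\pi\colon R^*\to (R/J(R))^*$. Using the defining property of the Jacobson radical, every unit of $R/J(R)$ lifts to a unit of $R$: a lift $u$ of a unit satisfies $uv=1+j$ for some $j\in J(R)$, and $1+j$ is invertible, so $u$ is invertible. Hence $\pi$ is surjective with kernel exactly $1+J(R)$. The essential observation is that $1+J(R)$ is a torsion group: for $j\in J(R)$ the element $1+j$ satisfies $1-(1+j)=-j$, which is nilpotent because $J(R)$ is nil, so part (i) yields that $1+j$ has finite order.

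With this in place, the two implications are routine. For the reverse direction, suppose $(R/J(R))^*$ contains a non-cyclic free subgroup, say free on $\bar x_1,\bar x_2$. I would choose preimages $x_1,x_2\in R^*$ under $\pi$ and set $N=\langle x_1,x_2\rangle$. The canonical surjection from the abstract free group on two generators onto $N$, composed with $\pi$, sends generators to $\bar x_1,\bar x_2$ and is therefore an isomorphism onto that free subgroup; since the composite is injective, the first map is injective, hence an isomorphism, so $N$ is non-cyclic free. For the forward direction, let $F\le R^*$ be non-cyclic free. As $F$ is torsion-free while $\ker\pi=1+J(R)$ is torsion, we get $F\cap\ker\pi=1$, so $\pi|_F$ is injective and $\pi(F)\cong F$ is a non-cyclic free subgroup of $(R/J(R))^*$.

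The arguments are direct, and I do not anticipate a serious obstacle; the only point demanding care is the torsion property of $1+J(R)$, which is precisely where both hypotheses $\Char(R)=p>0$ and $J(R)$ nil are needed (each fails in general, as $1+\epsilon$ has infinite order in $\Q[\epsilon]/(\epsilon^2)$), and this is supplied cleanly by part (i).
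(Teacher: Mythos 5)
Your proof is correct and follows essentially the same route as the paper: part (i) is the identical Frobenius argument ($(1-a)^{p^k}=1-a^{p^k}$ forces finite order), and part (ii) rests on the same key fact, namely that (i) together with nilness of $J(R)$ makes every element of $1+J(R)$ have finite order, combined with torsion-freeness of free groups. Your packaging of the forward direction (torsion kernel intersects a free subgroup trivially, so $\pi$ restricts to an isomorphism) is a slightly cleaner abstraction of the paper's word-by-word contradiction, and your lifting argument for the converse is more careful than the paper's one-line remark, but the underlying ideas coincide.
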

\begin{proof} (i) Assume that  $(1-a)^t=0$ for some positive integer $t$. Since $p^t>t$, $0=(1-a)^{p^t}=1-a^{p^t}$. Hence, $a^{p^t}=1$, a contradiction.
	
	(ii) Assume that $N$ is a non-cyclic free subgroup of $R^*$. We shall prove that $\overline{N}=N+J(R)$ is a non-cyclic free subgroup of $(R/J(R))^*$. Assume by contrary that there exist elements $g_1, \dots, g_m\in N$ and a group monomial $f(x_1, \dots, x_m)$ such that $f(\overline{g_1}, \ \dots, \overline{g_m})=\overline{1}$. Then, $1-f(g_1, \dots, g_m)\in J(R)$ which is a nilpotent element. By (i), $f(g_1, \dots, g_m)$ is an element of finite order in $N$ that is a contradiction. 
	
	The converse is obvious because in general, every pre-image of a non-cyclic free subgroup of $(R/J(R))^*$ via the natural homomorphism
	$$\varphi: R\longrightarrow R/J(R)$$
	is a non-cyclic free subgroup of $R^*$. 
\end{proof}
To prove the next result we need also the following lemma.
\begin{lemma}\label{l3.2'}\cite[Lemma 2.1]{Pa_Sa_98}
	If $G$ is a locally finite group and $F$ is a field of characteristic $p>0$, then $G\cap (1+J(FG))=O_p(G)$.
\end{lemma}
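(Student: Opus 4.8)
The plan is to prove both inclusions of $G\cap(1+J(FG))=O_p(G)$, dispatching the easy containment directly and reducing the hard one to finite subgroups. For $O_p(G)\subseteq G\cap(1+J(FG))$, write $P=O_p(G)$ and let $\omega(FP)$ be the augmentation ideal of $FP$, spanned by the $u-1$ with $u\in P$. Since $P$ is a locally finite $p$-group and $\Char(F)=p$, every element of $\omega(FP)$ lies in the augmentation ideal of some \emph{finite} $p$-subgroup, which is nilpotent; hence $\omega(FP)$ is nil. Because $P$ is normal in $G$, the identity $g(u-1)=(gug^{-1}-1)g$ shows that $\omega(FP)\cdot FG=FG\cdot\omega(FP)$ is a two-sided ideal, and it is again nil by the same local argument. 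Any nil ideal sits inside the Jacobson radical, so $\omega(FP)FG\subseteq J(FG)$; in particular $u-1\in J(FG)$ for each $u\in P$, giving $P\subseteq 1+J(FG)$.

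For the reverse inclusion the key device is a restriction principle: for every subgroup $H\le G$ one has $J(FG)\cap FH\subseteq J(FH)$. I would derive this from the fact that $FH$ is a direct summand of $FG$ as an $(FH,FH)$-bimodule, the projection $\pi\colon FG\to FH$ being the map that deletes all group elements outside $H$; a short computation using the characterization of $J$ via left-invertibility of the elements $1-rx$ then yields the claim. Since $G$ is locally finite, every element of $FG$ lies in $FH$ for some finite subgroup $H$, and $J(FH)$ is nilpotent as $FH$ is finite dimensional; combining this with the restriction principle shows that $J(FG)$ is a \emph{nil} ideal.

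Now take $g\in G$ with $g-1\in J(FG)$. Working inside the finite-dimensional algebra $F\langle g\rangle$ and using $g-1\in J(FG)\cap F\langle g\rangle\subseteq J(F\langle g\rangle)$, one gets $(g-1)^{p^b}=0$ for some $b$, whence $g^{p^b}=1+(g-1)^{p^b}=1$, so $g$ is a $p$-element. To land in the normal $p$-core, I would pass to the normal closure $K=\langle g\rangle^G$. Each conjugate satisfies $g^h-1=h(g-1)h^{-1}\in J(FG)$, and expanding $k-1=\sum_i s_1\cdots s_{i-1}(s_i-1)$ along a word $k=s_1\cdots s_r$ in the generators $\{(g^h)^{\pm1}\}$ shows $\omega(FK)\subseteq J(FG)$. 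Thus $\omega(FK)$ is nil, so every $k\in K$ is a $p$-element by the same Frobenius computation, and a locally finite group all of whose elements are $p$-elements is a $p$-group. Hence $K$ is a normal $p$-subgroup, so $g\in K\subseteq O_p(G)$, completing the inclusion $G\cap(1+J(FG))\subseteq O_p(G)$.

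The step I expect to be the main obstacle is exactly the passage to the infinite setting: for a finite group one simply invokes that $J(FG)$ is nilpotent and concludes that $g-1$ is nilpotent, but for a locally finite $G$ nilpotency of $J(FG)$ can fail. The restriction principle $J(FG)\cap FH\subseteq J(FH)$ is what repairs this, pushing every nilpotency question down to a finite subgroup where the radical is nilpotent; establishing this principle cleanly, and then upgrading ``$g$ is a $p$-element'' to ``$g\in O_p(G)$'' via the normal closure, is the technical heart of the proof.
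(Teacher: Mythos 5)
The paper offers no proof of this lemma: it is quoted directly from \cite[Lemma 2.1]{Pa_Sa_98}, so there is no internal argument to compare yours against, and your proposal must be judged on its own merits. It is correct. For the inclusion $O_p(G)\subseteq 1+J(FG)$, the one point you should make explicit is that nilness of the two-sided ideal $\omega(FP)FG$ (where $P=O_p(G)$) needs slightly more than nilness of $\omega(FP)$ itself: given $x\in\omega(FP)FG$, choose a finite subgroup $H\le G$ containing all relevant supports; then $x\in\omega(F(P\cap H))FH$, where $P\cap H$ is a finite \emph{normal} $p$-subgroup of $H$, and the ideal $\omega(F(P\cap H))FH$ of $FH$ is nilpotent because normality gives $\bigl(\omega(F(P\cap H))FH\bigr)^n=\omega(F(P\cap H))^n FH$ and $\omega(F(P\cap H))$ is nilpotent. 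With that spelled out, the containment in $J(FG)$ follows as you say, since nil ideals lie in the radical. The reverse inclusion is sound as written: the restriction principle $J(FG)\cap FH\subseteq J(FH)$ via the bimodule projection $\pi\colon FG\to FH$, the resulting nilness of $J(FG)$, the Frobenius computation $g^{p^b}=1+(g-1)^{p^b}$, and the upgrade from ``$g$ is a $p$-element'' to $g\in O_p(G)$ via the normal closure $\langle g\rangle^{G}$ together with the telescoping identity $k-1=\sum_i s_1\cdots s_{i-1}(s_i-1)$ are all valid. These are the standard devices (essentially Passman's, and the ones underlying Sahai's cited proof), so your route is the canonical one rather than a genuinely different alternative; its value is that it makes the paper's citation self-contained.
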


Let $F$ be a field, $G$ a group written multiplicatively and $\sigma\colon G\to \Aut(F),g\mapsto \sigma_g $ a group homomorphism. Let 
$F^\sigma G$ denote the skew group algebra of $G$ over $F$. Using the results we get above, now we study the problem on the existence of non-cyclic free subgroups in an arbitrary almost subnormal subgroup of $F^\sigma G$, where $G$ is a locally finite group. Recall that the skew group algebra $F^\sigma G$ of  $G$ over $F$ with respect to $\sigma$ is defined as follows: as a set, $F^\sigma G$ is the vector space over $F$ with the basis $\{g\mid g\in G\}$, that is, every element $x$ of $F^\sigma G$ is written uniquely as a finite sum $x=\sum\limits_{g\in G}a_g g,$ where $ a_g\in F$. The subset $\supp(x)=\{g\in G\mid a_g\ne 0\}$ of $G$ is called the {\it support} of $x$. The addition in $F^\sigma  G$ is defined by 
$$\sum\limits_{g\in G}a_g g+\sum\limits_{g\in G}b_g g=\sum\limits_{g\in G}(a_g+b_g) g.$$ The multiplication is defined as the extension of the following rule: for every $g\in G$ and $a\in F, ga=\sigma_g(a)g$. If $\sigma$ is trivial, that is, $\sigma_g(a)=a$ for every $g\in G, a\in F$, then we write $FG$ instead of $F^\sigma G$. Clearly, $FG$ is \textit{group algebra} of $G$ over $F$. 
If $G$ is a locally finite group, then $F^\sigma G$ is a locally finite $F$-algebra. Now, using results obtained above on subgroups of finite dimensional $F$-algebras, we can prove the following theorem which is the main result in this note.
\begin{theorem}\label{main_new} Let $F$ be a field, $G$ a locally finite group, and $F^\sigma G$ the group algebra of $G$ over $F$ respect to a group homomorphism $\sigma: G\longrightarrow \Aut(F)$. Then, the following statements hold:
	\begin{enumerate}
		\item If $\Char(F)=0$, then every non-abelian almost subnormal subgroup of $(F^\sigma G)^*$ contains a non-cyclic free subgroup.
		\item Let $\Char(F)=p>0$ and $F$ is a non-absolute field. Then
		 
		  $(i)$ Every non-abelian almost subnormal subgroup of $(F^\sigma G)^*$ contains a non-cyclic free subgroup in case either $\sigma$ is non-trivial or the derived subgroup $G'$ of $G$ is not a $p$-group;
		  
		  $(ii)$ $(F^\sigma G)^*$ contains no non-cyclic free subgroups if and only if $\sigma$ is trivial and $G'$ is a $p$-group.
	\end{enumerate}
\end{theorem}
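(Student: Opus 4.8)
The plan is to reduce the problem on the locally finite skew group algebra $F^\sigma G$ to the finite-dimensional setting already handled in Theorem~\ref{main} and Lemma~\ref{4.1}, exploiting the fact that any finite subset of $F^\sigma G$ lies in $F^\sigma H$ for some finitely generated, hence finite, subgroup $H$ of $G$. First I would record the structural fact that for finite $H$ the algebra $F^\sigma H$ is finite-dimensional over $F$ (of dimension $|H|$), so its unit group is a linear group and the machinery of Tits' alternative applies through Lemma~\ref{4.1}.

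For part~(1), suppose $N$ is a non-abelian almost subnormal subgroup of $(F^\sigma G)^*$ containing no non-cyclic free subgroup. I would argue by contradiction: every finitely generated subgroup $N_0$ of $N$ is contained in the unit group of some finite-dimensional subalgebra $F^\sigma H$, and the image of $N_0$ inside that subalgebra inherits almost subnormality (Lemma~\ref{3.1}). Since $N_0$ contains no non-cyclic free subgroup, Lemma~\ref{4.1} forces $N_0$ to satisfy a strict GPCGI, and then Theorem~\ref{main} applies with $\Char(F)=0$. In characteristic zero $J(F^\sigma H)$ is typically trivial for finite $H$ (Maschke-type reasoning when the order is invertible), so the conclusion $N_0'\subseteq 1+J$ collapses to $N_0'=1$, i.e.\ $N_0$ is abelian; taking $N_0$ to contain two non-commuting elements of $N$ yields the contradiction. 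The main subtlety here is confirming that $J(F^\sigma H)=0$ in the skew setting; this should follow because $\Char(F)=0$ makes $|H|$ invertible and a Maschke argument semisimplifies $F^\sigma H$.

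For part~(2)(i), assume $\Char(F)=p>0$, $F$ non-absolute, and either $\sigma$ nontrivial or $G'$ not a $p$-group. Again I would pass to finitely generated subgroups $N_0$ of the almost subnormal $N$ and to finite-dimensional $F^\sigma H$. If $N_0$ contained no non-cyclic free subgroup, Lemma~\ref{4.1} and Theorem~\ref{main} would give $N_0'\subseteq 1+J(F^\sigma H)$ together with the group identity $(xyx^{-1}y^{-1})^{p^s}=1$. I would then leverage Proposition~\ref{prop:4.3}(ii) (whose nil-radical hypothesis holds since $F^\sigma H$ is Artinian) to transfer the free-subgroup question to $(F^\sigma H/J)^*$, and use Lemma~\ref{l3.2'} to control the intersection $G\cap(1+J(FG))=O_p(G)$. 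The hypotheses should guarantee non-commutativity survives modulo the $p$-part: if $\sigma$ is nontrivial then some $g$ acts nontrivially on $F$, producing non-central elements, and if $G'$ is not a $p$-group then $O_p(G)$ cannot absorb all commutators, so $N$ cannot be reduced to an abelian-by-($p$-power) group, contradicting the identity obtained above.

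For part~(2)(ii), the forward direction says that if $(F^\sigma G)^*$ has no non-cyclic free subgroup, then $\sigma$ is trivial and $G'$ is a $p$-group; this is essentially the contrapositive of~(i) applied to $N=(F^\sigma G)^*$ itself, noting that when $\sigma$ is nontrivial or $G'$ is not a $p$-group the unit group is non-abelian and~(i) produces a free subgroup. For the converse, I would assume $\sigma$ trivial and $G'$ a $p$-group, so $F^\sigma G=FG$ is an ordinary group algebra with $G$ locally finite and $p$-by-abelian-by-nothing structure; here Lemma~\ref{l3.2'} gives $G\cap(1+J(FG))=O_p(G)$, and $G'$ being a $p$-group should force $[FG,FG]$ to land in $1+J(FG)$, making $(FG/J)^*$ abelian, whence by Proposition~\ref{prop:4.3}(ii) no non-cyclic free subgroup can exist. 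I expect the hard part to be the converse in~(2)(ii): controlling the global Jacobson radical of the \emph{infinite}-dimensional $FG$ and showing that $G'$ being a $p$-group suffices to push all commutators into $1+J(FG)$ uniformly, rather than just on each finite piece.
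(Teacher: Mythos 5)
Your overall strategy (reduce to finite $H\le G$, apply Lemma~\ref{4.1} and Theorem~\ref{main} to the finite-dimensional algebra $F^\sigma H$, then handle the radical) is the paper's strategy, but your reduction breaks exactly at the step where Theorem~\ref{main} is invoked. You pass to a finitely generated subgroup $N_0\le N$ and claim it ``inherits almost subnormality (Lemma~\ref{3.1})'' inside $(F^\sigma H)^*$. Lemma~\ref{3.1} is about \emph{homomorphic images} of almost subnormal subgroups; it says nothing about subgroups, and the claim is false in general: if subgroups of almost subnormal subgroups were almost subnormal, then (taking the length-one series $N=H$) every subgroup of every group would be almost subnormal. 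Since the hypothesis of Theorem~\ref{main} is precisely that the subgroup satisfying the strict GPCGI is almost subnormal in $R^*$, your applications of Theorem~\ref{main} to $N_0$ in parts (1) and (2)(i) are unjustified. The paper works instead with $M=N\cap(F^\sigma H)^*$, which \emph{is} almost subnormal in $(F^\sigma H)^*$ (intersect each term of an almost normal series of $N$ with $(F^\sigma H)^*$; both normality and finiteness of index survive intersection), still contains the chosen non-commuting $x,y$, and still has no non-cyclic free subgroups; Lemma~\ref{4.1} and Theorem~\ref{main} are then applied to $M$. (There is a real tension here --- Lemma~\ref{4.1} is stated for finitely generated groups while Theorem~\ref{main} needs almost subnormality --- and the resolution is to apply both to $M$, not to $N_0$.) With $M$ in place of $N_0$, your part (1) becomes the paper's proof; your Maschke-type justification of $J(F^\sigma H)=0$ in characteristic $0$ is fine, where the paper instead quotes $J(F^\sigma G)=0$ for locally finite $G$.

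The more serious gap is in part (2)(i): nothing in your sketch connects the unit-group conclusions ($N_0'\subseteq 1+J(F^\sigma H)$, the identity $(xyx^{-1}y^{-1})^{p^s}=1$) to the statements that must actually be proved, namely that \emph{every} $z\in G'$ has $p$-power order and that $\sigma_g$ fixes $F$ pointwise for \emph{every} $g\in G$. Elements of $G'$ need not lie in $N$ at all, so assertions like ``$O_p(G)$ cannot absorb all commutators, so $N$ cannot be reduced to an abelian-by-($p$-power) group'' have no deductive content; moreover your appeal to Lemma~\ref{l3.2'} at this point is circular, since that lemma concerns the untwisted algebra $FG$ and thus presupposes the triviality of $\sigma$ you are trying to establish. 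The missing device, which is the heart of the paper's proof, is to build the witnesses into $H$: given an arbitrary $z=g_1h_1g_1^{-1}h_1^{-1}\cdots g_mh_mg_m^{-1}h_m^{-1}\in G'$ and an arbitrary $g\in G$, take $H$ generated by $\{g_1,h_1,\dots,g_m,h_m,g\}\cup\supp(x)\cup\supp(y)$, deduce $M'\subseteq 1+J(F^\sigma H)$, and then use nilpotency of $J(F^\sigma H)$ to conclude $z^{p^s}=1$, and $1-c^{-1}\sigma_g(c)=1-c^{-1}gcg^{-1}\in J(F^\sigma H)\cap F=\{0\}$, i.e.\ $\sigma_g(c)=c$. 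Without this step, (2)(i) --- and with it the half of (2)(ii) that you derive from (2)(i) --- is not established. Your converse half of (2)(ii) does match the paper: the ``uniformity'' you flag as the expected hard part is exactly what Lemma~\ref{l3.2'} already supplies, being a statement about the whole locally finite group $G$ ($G'\subseteq O_p(G)\subseteq 1+J(FG)$ gives $gh-hg\in J(FG)$ globally), after which Proposition~\ref{prop:4.3}(ii) applies because $J(FG)$ is nil for locally finite $G$.
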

\begin{proof} $(1)$	Note that if $F$ is a field  of characteristic $0$, and $G$ is a locally finite group, then the Jacobson radical $J(F^\sigma G)=0$ (see \cite[Corollary 4.19]{Bo_La_99} and \cite[Theorem~ A]{Pa_Di_03}).  Now, assume that $N$ is a non-abelian almost subnormal subgroup of $(F^\sigma G)^*$. Then, there exist $x=\sum\limits_{g\in G}a_g g, y=\sum\limits_{g\in G}b_g g\in N$ such that $xy\ne yx$. Let $H$ be the subgroup of $G$ generated by $\supp(x)\cup \supp(y)$ and consider the group algebra $F^\sigma H$. Put $M=N\cap (F^\sigma H)^*$. Then, $M$ is an almost subnormal subgroup in $(F^\sigma H)^*$. If $M$ contains no non-cyclic free subgroups, then by Lemma~\ref{4.1}, $M$ satisfies a strict GPCGI. By Theorem~\ref{main}, $M'\subseteq 1+J(F^\sigma H)$. By remark in the beginning of the proof, $J(F^\sigma H)=0$, so $M'=1$, and, then, $M$ is abelian. As a result, $xy=yx$, a contradiction, so $(1)$ holds.
	
$(2)$ $(i)$ Assume that $N$ is a non-abelian almost subnormal subgroup of $F^\sigma G$, and $N$ contains no non-cyclic free subgroups. We have to show that $\sigma$ is trivial and $G'$ is a $p$-group. Indeed, since $N$ is non-abelian, there exist $x=\sum\limits_{u\in G}a_u u$ and $y=\sum\limits_{u\in G}b_u u$ in $N$ such that $xy\ne yx$. Let $z\in G'$, that is, $z=g_1h_1g_1^{-1}h_1^{-1}\dots g_mh_mg_m^{-1}h_m^{-1}$ for some $g_1,\dots,g_m, h_1,\dots,h_m\in G$. Let $g\in G$, and  $H$ be the subgroup of $G$ generated by the set $$\{g_1,\dots,g_m, h_1,\dots,h_m\}\cup \supp(x)\cup \supp(y)\cup \{g\}.$$ It is easy to see that the skew group algebra $F^\sigma  H$ is a subalgebra of $F^\sigma G$. Also, $F^\sigma  H$ is a finite-dimensional vector space over $F$ because $H$ is a finite group.	Put $M=N\cap (F^\sigma H)^*$. It is obvious that $M$ is almost subnormal in $(F^\sigma H)^*$. If $M$ contains no non-cyclic free subgroups, then by Lemma~\ref{4.1}, $M$ satisfies a strict GPCGI. In  view of Theorem~\ref{main}, $M'\subseteq 1+J(F^\sigma H)$. In particular, $z=1+\alpha$, where $\alpha\in J(F^\sigma H)$. Since $F^\sigma H$ is artinian, $J(F^\sigma H)$ is nilpotent. Let $s$ be the nilpotency index of  $J(F^\sigma H)$.  Then, ${z}^{p^s} =(1+\alpha)^{p^s}=1+\alpha^{p^s}=1$, which implies that the order of $z$ is $p^n$ for some non-negative integer $n$. This holds when $z$ ranges over $G'$, so $G'$ is a $p$-group. Moreover, for any $c\in F$, we have $c^{-1}\sigma_g(c)=c^{-1} gc g^{-1}\in M'\subseteq  1+J(F^\sigma H)$. Hence, $1-c^{-1}\sigma_g(c)\in J(F^\sigma H)$. This occurs if and only if $c=\sigma_g(c)$ since $1-c^{-1}\sigma_g(c)\in F$. Again, this holds when $c$ ranges over $F$ and $g$ ranges over $G$, so $\sigma$ is trivial. Thus, $(i)$ holds. 

$(ii)$ The ``If" follows from $(i)$. To show ``Only if", assume that $\sigma$ is trivial and $G'$ is a $p$-group. Then, $F^\sigma G=FG$, and we claim that $(FG/J(FG))^*$ is abelian. It suffices to show $gh-hg\in J(FG)$ for every $g,h\in G$. Indeed, if $g,h\in G$ then $gh=chg$, where $c=ghg^{-1}h^{-1}\in G'\subseteq O_p(G)$. By Lemma~\ref{l3.2'}, $c=1+z$ for some $z\in J(FG)$. Hence, $gh-hg=(1+z)hg-hg=zgh\in J(FG)$. The claim is shown. Hence, $(F G/J(F G))^*$ contains no non-cyclic free subgroups, neither does $(FG)^*$ by Proposition~\ref{prop:4.3}. Hence, $(ii)$ holds.
\end{proof}

%
%
%
\noindent\textbf{Acknowledgement.} The authors would like to express their sincere gratitude
to the referee. This research was funded by Vietnam National Foundation for Science and Technology Development (NAFOSTED) under Grant No. 101.04-2019.323.


\begin{thebibliography}{}
		
	

\bibitem{chiba_88} K. Chiba, \textit{Skew fields with a nontrivial generalised power central rational identity}, Bull. Austral. Math. Soc. \textbf{49}, 85--90 (1994).
	
\bibitem{Pa_Di_03} J.M. Dimitrova, \textit{Nil Ideals of Crossed Products}, Commun. Algebra \textbf{31}, 4445--4453 (2003). 

	
	
		
	
\bibitem{HBK17} B.X. Hai, H.V.  Khanh, and  M.H. Bien,  \textit{Generalized power central group identities in almost subnormal subgroups of  $\GL_n(D)$}, Algebra i Analiz (Russian) \textbf{34:1}, 225-239 (2019), English transl. in St. Petersburg Math. J. (2019) (accepted).
	
\bibitem{Pa_Ha_89} B. Hartley, \textit{Free groups in normal subgroups of unit groups and arithmetic groups}, Contemp. Math. \textbf{93}, 173--177 (1989).

		
\bibitem{her2}  I.N. Herstein, C. Procesi C., and  M. Schacher, \textit{Algebraic valued functions on noncommutative rings}, J. Algebra. \textbf{36}, 128--150 (1975). 
	
			
\bibitem{Bo_La_99}  T.Y. Lam, \textit{A First Course in Noncommutative Rings}  (GTM \textbf{131}, Springer, Berlin, 1991).
	

	
	
	
	
		

\bibitem{rotman}  J.J. Rotman, Advanced Modern Algebra. In: Graduate Studies in Mathematics, vol. \textbf{165}, 3rd edition-part I,  AMS (2015).


\bibitem{Pa_Sa_98}  M. Sahai, \textit{On the Jacobson radical and unit groups of group algebras},  Publicacions Matematiques \textbf{42}, 339--346 (1998).


\bibitem{tits}  J. Tits, \textit{Free subgroups in linear groups}, J. Algebra \textbf{20}, 250--270 (1972).
		
\bibitem{Pa_To_85}  G.M. Tomanov,  \textit{Generalized group identities in linear groups},   Math. Sbornik,  \textbf {51}, 33--46 (1985).

\end{thebibliography}
\end{document}